\setlist[enumerate,1]{(i)}
\newcommand{\Kl}{\mathrm{Kl}}
\newcommand{\C}{\mathbf{C}}
\newcommand{\F}{\mathbf{F}}
\newcommand{\bone}{\mathbf{1}}
\newcommand{\cA}{\mathcal{A}}
\newcommand{\cB}{\mathcal{B}}
\newcommand{\cC}{\mathcal{C}}
\newcommand{\cX}{\mathcal{X}}
\newcommand{\etab}{\bm{\eta}}
\newcommand{\rhob}{\bm{\rho}}
\theoremstyle{definition}
\newtheorem{remark}[section]{Remark}
\theoremstyle{plain}
\newtheorem{theorem}[section]{Theorem}
\newtheorem{cor}[section]{Corollary}
\begin{document}
\title{On the distribution of multivariate Jacobi sums}
\author{Qing Lu\thanks{School of Mathematical Sciences, Beijing Normal University, Beijing
100875, China; email: \texttt{qlu@bnu.edu.cn}. Partially supported by
Beijing Natural Science Foundation Grant 1202014; Fundamental Research Funds
for Central Universities of China; China Scholarship Council.}\and Weizhe
Zheng\thanks{Morningside Center of Mathematics and Hua Loo-Keng Key
Laboratory of Mathematics, Academy of Mathematics and Systems Science,
Chinese Academy of Sciences, Beijing 100190, China; University of the
Chinese Academy of Sciences, Beijing 100049, China; email:
\texttt{wzheng@math.ac.cn}. Partially supported by National Natural Science
Foundation of China Grants 11822110, 11688101, 11621061.}}
\date{}
\maketitle

\begin{abstract}
Let $\F_q$ be a finite field of $q$ elements. We show that the normalized
Jacobi sum $q^{-(m-1)/2}J(\chi_1,\dots,\chi_m)$ ($\chi_1\dotsm \chi_m$
nontrivial) is asymptotically equidistributed on the unit circle, when
$\chi_1\in \cA_1,\dots, \chi_m\in \cA_m$ run through arbitrary sets of
nontrivial multiplicative characters of $\F_q^\times$, if $\#\cA_1\ge
q^{\frac{1}{2}+\epsilon}$, $\#\cA_2 \ge (\log q)^{\frac{1}{\delta}-1}$ for
$\epsilon>\delta>0$ fixed and $q\to \infty$ or if $\#\cA_1\#\cA_2/q\to
\infty$. This extends previous results of Xi, Z.~Zheng, and the authors.
\end{abstract}

Let $\F_q$ be a finite field of $q$ elements and let $\C$ be the field of
complex numbers. We let $\cX_q$ denote the set of nontrivial multiplicative
characters $\F_q^\times\to \C^\times$. For $m\ge 2$, $\chi_1,\dots,\chi_m\in
\cX_q$ satisfying $\chi_1\dotsm \chi_m\neq \bone$, we consider the Jacobi
sum
\[J(\chi_1,\dots,\chi_m)=\sum_{\substack{a_1,\dots,a_m\in \F_q^\times\\
a_1+\dots+a_m=1}}\chi_1(a_1)\dotsm\chi_m(a_m),
\]
which has absolute value $q^{\frac{m-1}{2}}$. For a nonempty subset
$\cA\subseteq \cX^m_q$, we define the \emph{discrepancy} $D(\cA)$, to be the
supremum of
\[\left\lvert
\frac{\#\{(\chi_1,\dots,\chi_m)\in \cA^\circ\mid
q^{-\frac{m-1}{2}}J(\chi_1,\dots,\chi_m)\in e^{2\pi
i[a,b]}\}}{\#\cA^\circ}-(b-a)\right\rvert
\]
for all real numbers $a\le b\le a+1$, where
\[\cA^\circ=\{(\chi_1,\dots,\chi_m)\in
\cA\mid \chi_1\dotsm \chi_m\neq \bone\}
\]
and $e^{2\pi i[a,b]}$ denotes the image of the interval $[a,b]$ under the
map $x\mapsto e^{2\pi i x}$. For $\cA^\circ$ empty we adopt the convention
that $D(\cA)=1$.

The goal of this note is to prove the following equidistribution property.

\begin{theorem}\label{t.D}
There exists a constant $C$ such that for all $q$ and all integers $m\ge 2$,
$s\ge 1$ and for all nonempty subsets $\cA_1\subseteq \cX_q$,
$\cA_2\subseteq \cX_q$, $\cB\subseteq\cX_q^{m-2}$, we have
\begin{gather}
\label{e.0}D(\cA_1\times \cA_2\times \cB)\le C\left(s\left(\frac{\sqrt q}{A_1}\right)^{\frac{1}{2s+1}}+\frac{1}{\sqrt{sA_2}}\left(\frac{q}{A_1}\right)^{\frac{1}{2s}}\log q\right),\\
\label{e.1}D(\cA_1\times \cA_2\times \cB)\le C\left(\frac{q}{A_1A_2}\right)^{\frac{1}{4}}.
\end{gather}
Here $A_i=\#\cA_i$.
\end{theorem}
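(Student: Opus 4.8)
The plan is to bound the discrepancy by its Weyl sums and then to estimate those sums by completing the character sums over the large sets $\cA_1$ and $\cA_2$; the genuinely hard input will be square-root cancellation for the resulting complete sums. First I would apply a quantitative equidistribution criterion on the unit circle. By the Erd\H{o}s--Tur\'an inequality, for every integer $K\ge 1$,
\[
D(\cA_1\times\cA_2\times\cB)\ll \frac1K+\sum_{k=1}^{K}\frac1k\,\lvert W_k\rvert,\qquad
W_k=\frac1{\#\cA^\circ}\sum_{(\chi_1,\dots,\chi_m)\in\cA^\circ}\bigl(q^{-(m-1)/2}J(\chi_1,\dots,\chi_m)\bigr)^{k},
\]
so everything reduces to bounds on the power moments $W_k$ that are uniform in $k$ and in $\cB$.

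Next I would linearize $W_k$ through the Gauss-sum factorization $J(\chi_1,\dots,\chi_m)=g(\chi_1)\dotsm g(\chi_m)/g(\chi_1\dotsm\chi_m)$, where $g(\chi)=\sum_{t\in\F_q^\times}\chi(t)\psi_0(t)$ for a fixed nontrivial additive character $\psi_0$ and $\lvert g(\chi)\rvert=\sqrt q$. The factor contributed by $\chi_3,\dots,\chi_m$ has modulus one, so the triangle inequality in $\cB$ together with $\#\cA^\circ\sim A_1A_2\#\cB$ reduces the task to bounding, uniformly in the product $\psi=\chi_3\dotsm\chi_m$,
\[
\Sigma_k(\psi)=\sum_{\chi_1\in\cA_1}\sum_{\chi_2\in\cA_2}\frac{g(\chi_1)^{k}\,g(\chi_2)^{k}}{q^{k/2}\,g(\chi_1\chi_2\psi)^{k}},\qquad \chi_1\chi_2\psi\neq\bone,
\]
with $\lvert W_k\rvert\ll (A_1A_2)^{-1}\max_\psi\lvert\Sigma_k(\psi)\rvert$.

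Then I would estimate $\Sigma_k(\psi)$ by completion, in two ways matching the two bounds. For \eqref{e.1} the plan is Cauchy--Schwarz in $\chi_1$, extending the outer variable to all of $\cX_q$; expanding the square and carrying out the complete $\chi_1$-sum by orthogonality collapses it, via $g(\chi)^{-1}=\chi(-1)g(\bar\chi)/q$, into a diagonal term plus an off-diagonal family indexed by $\mu=\chi_2'\bar\chi_2$. Here the exclusion $\chi_1\chi_2\psi\neq\bone$ is essential: it removes the trivial character from the completed inner sum, so the diagonal is $\ll A_2q$ rather than $\ll A_2q^{k}$, keeping the estimate uniform in $k$, and a LeVeque/Erd\H{o}s--Tur\'an step then produces the clean, parameter-free exponent. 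For \eqref{e.0} I would instead treat the $\chi_1$-sum by H\"older's inequality with exponent $2s$, reducing to the complete moment $\sum_{\chi_1\in\cX_q}\bigl\lvert\sum_{\chi_2\in\cA_2}(\cdots)\bigr\rvert^{2s}$, whose diagonal has size $\ll_s qA_2^{s}$; after the factors of $s$ are optimized and $\sum_k\lvert W_k\rvert/k$ contributes a $\log q$, this yields the second term of \eqref{e.0}. The first term of \eqref{e.0} comes from using $\cA_1$ alone: completing the single sum over $\chi_1$ against the Pontryagin dual $\F_q^\times$ of the character group, bounding each complete frequency sum by $\sqrt q$ and amplifying with a $(2s+1)$-st moment gives the exponent $1/(2s+1)$, balanced against the $1/K$ cutoff.

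The main obstacle is the off-diagonal. After completion, each off-diagonal contribution, on opening the $k$-th powers of the Gauss sums, becomes a hyper-Kloosterman-type sum in roughly $2k$ variables twisted by $\mu$; establishing square-root cancellation with the number of large Frobenius eigenvalues (equivalently the total Betti number) controlled \emph{polynomially} in $k$ is the technical heart, since only then does the sum over $k$ in the Erd\H{o}s--Tur\'an step converge to a genuine power saving. I would realize these sums as traces of $\Fr$ on the cohomology of a Kloosterman sheaf $\Kl$, invoke Deligne's Riemann Hypothesis for the weight bound, and bound the cohomology via the Euler--Poincar\'e formula together with $\swan$-conductor estimates, the monodromy being governed by the classical groups $\Sp$, $\SO$, $\SL$. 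Once the off-diagonal is absorbed, the final optimization over $s$ and $K$ is routine and yields \eqref{e.0} and \eqref{e.1}.
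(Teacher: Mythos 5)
Your treatment of \eqref{e.0} is essentially the paper's: Erd\H{o}s--Tur\'an, the factorization $J=G(\chi_1)\dotsm G(\chi_m)/G(\chi_1\dotsm\chi_m)$, reduction to a single element of $\cB$, H\"older with exponent $2s$ in $\chi_1$ after completing to all of $\widehat{\F_q^\times}$, a diagonal of size $s!A_2^sq$, and square-root cancellation for the complete off-diagonal sums $\sum_{\chi}\prod_iG(\chi\eta_i\lambda)^n\overline{G(\chi\rho_i\lambda)}^n$. For that last input you do not need to redo the sheaf-theoretic work you sketch: the bound with the constant linear in $sn$ is quoted directly from Katz \cite{Katz}*{p.~162}. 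One descriptive slip: the exponent $\frac{1}{2s+1}$ does not arise from a $(2s+1)$-st moment; it comes from balancing the cutoff $\frac1K$ against the off-diagonal term $s\left(\frac{sK\sqrt q}{A_1}\right)^{\frac{1}{2s}}$ when choosing $K$.

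The genuine gap is in \eqref{e.1}. Cauchy--Schwarz in $\chi_1$ over the completed character group is exactly the $s=1$ case of the H\"older argument, and its off-diagonal (the pairs $\chi_2\neq\chi_2'$) contributes $A_1A_2^2\,n\sqrt q$ inside the square even after full square-root cancellation, hence an extra term of order $\sqrt{n/A_1}\,q^{1/4}$ in the normalized $n$-th moment. Summing over $n\le K$ and optimizing, this term is larger than $\left(\frac{q}{A_1A_2}\right)^{1/4}$ by a factor of roughly $\left(\frac{A_2^3}{qA_1}\right)^{1/8}$, so your route only recovers \eqref{e.0} with $s=1$ and fails to prove \eqref{e.1} precisely in the regime $(A_1q)^{1/3}\le A_2\le A_1$ where, as the paper's Remark (iii) notes, \eqref{e.1} is the stronger statement (e.g.\ $A_1=A_2=q^{3/4}$). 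The paper's proof of the underlying moment bound $\lvert M^{(n)}\rvert\le n\sqrt{A_1A_2q}\,B$ avoids any off-diagonal altogether: it writes $G(\chi)^n=\sum_{a\in\F_q^\times}\Kl_n(a)\chi(a)$, applies Deligne's bound $\lvert\Kl_n(a)\rvert\le nq^{(n-1)/2}$ pointwise, and then applies Cauchy--Schwarz over $a\in\F_q^\times$ to $g_1(a)g_2(a)$ with $g_i(a)=\sum_{\chi\in\cA_i}q^{-n/2}G(\chi)^n\overline{\chi}(a)$; since $\lvert G(\chi)^n\rvert^2=q^n$ exactly, orthogonality gives the identity $\sum_a\lvert g_i(a)\rvert^2=A_i(q-1)$ with no error term, and the two sets $\cA_1,\cA_2$ enter symmetrically. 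This switch from the character side to the $a$-variable is the idea your proposal is missing.
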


\begin{cor}\label{c}
\begin{enumerate}
\item Let $0<\epsilon \le\frac{1}{2}$ and $c>0$ be real constants. Then
    for nonempty subsets $\cA_1\subseteq \cX_q$, $\cA_2\subseteq \cX_q$,
    $\cB\subseteq\cX_q^{m-2}$ satisfying $\# \cA_1\ge
cq^{\frac{1}{2}+\epsilon}$, we have $D(\cA_1\times\cA_2\times \cB)\to 0$
as
\[\frac{\#\cA_2}{(\log
q)^{\frac{1}{\epsilon}-1}(\log\log q)}\to
 \infty.
 \]
\item For nonempty subsets $\cA_1\subseteq \cX_q$, $\cA_2\subseteq \cX_q$,
    $\cB\subseteq\cX_q^{m-2}$, $D(\cA_1\times\cA_2\times \cB)\to 0$ as
    $\#\cA_1\#\cA_2/q\to \infty$.
\end{enumerate}
\end{cor}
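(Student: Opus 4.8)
The plan is to deduce both assertions from Theorem~\ref{t.D}. Part~(ii) is immediate: by \eqref{e.1} we have $D(\cA_1\times\cA_2\times\cB)\le C(q/(A_1A_2))^{1/4}$, and the hypothesis $\#\cA_1\#\cA_2/q=A_1A_2/q\to\infty$ forces the right-hand side to $0$. So the work is entirely in part~(i), where I would apply \eqref{e.0} with a value of $s$ chosen as a function of $q$. First note the hypothesis there forces $q\to\infty$: since $A_2\le\#\cX_q<q$, if $q$ stayed bounded the ratio $\#\cA_2/((\log q)^{1/\epsilon-1}\log\log q)$ would stay bounded.

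Write $L=\log q$. The hypothesis $\#\cA_1\ge cq^{1/2+\epsilon}$ gives $\sqrt q/A_1\le c^{-1}q^{-\epsilon}$ and $q/A_1\le c^{-1}q^{1/2-\epsilon}$, which I would substitute into \eqref{e.0}. The crux is to take $s=s(q)$ to be the largest integer with $2s+1\le \epsilon L/\log L$; for $q$ large this is a well-defined integer with $s\ge1$, $s\to\infty$, and $s=\tfrac{\epsilon L}{2\log L}(1-o(1))$, the relative error being only $O(\log L/L)$. This choice is engineered so that $q^{-\epsilon/(2s+1)}\le e^{-\log L}=1/L$.

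With this $s$, the first term of \eqref{e.0} is at most $s\,c^{-1/(2s+1)}q^{-\epsilon/(2s+1)}\le (\epsilon/2)c^{-1/(2s+1)}/\log L$, which tends to $0$ since $c^{-1/(2s+1)}\to1$. For the second term, the smallness of the relative error gives $1/(2s)=\tfrac{\log L}{\epsilon L}(1+O(\log L/L))$, whence $q^{(1/2-\epsilon)/(2s)}=(\log q)^{(1/2-\epsilon)/\epsilon}(1+o(1))$, the floor-induced perturbation contributing only a factor $\exp(O((\log L)^2/L))\to1$. Using also $1/\sqrt s=\sqrt{2/\epsilon}\,(\log\log q/\log q)^{1/2}(1+o(1))$ and collecting the powers of $\log q$ via $\tfrac{1/2-\epsilon}{\epsilon}+\tfrac12=\tfrac{1-\epsilon}{2\epsilon}=\tfrac12(\tfrac1\epsilon-1)$, the second term becomes $(1+o(1))\sqrt{2/\epsilon}\,\bigl((\log q)^{1/\epsilon-1}\log\log q/A_2\bigr)^{1/2}$. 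This tends to $0$ precisely when $\#\cA_2/((\log q)^{1/\epsilon-1}\log\log q)\to\infty$, which is the hypothesis of part~(i).

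The genuinely delicate step is the exponent bookkeeping just described: one must confirm that discretizing $s$ by the floor disturbs the exponent of $\log q$ only within an $o(1)$ that still leaves the error factor tending to $1$, which is exactly what the relative error $O(\log L/L)$ guarantees. The conceptual point—and the reason the single choice $2s+1\approx\epsilon L/\log L$ suffices—is that it simultaneously makes the first term $O(1/\log\log q)$ and turns the second term into the square root of the very quantity $(\log q)^{1/\epsilon-1}\log\log q/\#\cA_2$ whose vanishing is assumed; no separate balancing of the two terms is then required.
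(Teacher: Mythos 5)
Your proposal is correct and follows essentially the same route as the paper: part (ii) directly from \eqref{e.1}, and part (i) by applying \eqref{e.0} with $s\asymp\frac{\epsilon\log q}{2\log\log q}$ (the paper takes $s=\lceil\frac{\epsilon\log q}{2\log\log q}\rceil$ and tracks the two terms via their logarithms, which is the same bookkeeping you carry out asymptotically). The resulting bounds — first term $O(1/\log\log q)$, second term $O\bigl(\sqrt{(\log q)^{1/\epsilon-1}(\log\log q)/A_2}\bigr)$ — match the paper's.
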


\begin{remark}
\begin{enumerate}
\item For $m=2$, the problem of asymptotic equidistribution of
    \[\{q^{-1/2}J(\chi_1,\chi_2)\}_{\chi_1\in \cA_1,\ \chi_2\in \cA_2,\
    \chi_1\chi_2\neq \bone}\] was first suggested by Shparlinski
    \cite[Section~5]{Shp}.

\item The case $s=1$ of \eqref{e.0} was proved in \cite[(1.2)]{LZZ}, which
    implies $D(\cA_1\times\cA_2\times \cB)\to 0$ as $\frac{A_1
    A_2}{q\log^2 q}\to \infty$. The case $m=2$ of \eqref{e.0} is a theorem
    of Xi \cite[Theorem 1.5]{Xi} and the problem of generalization to
    multivariate Jacobi sums was suggested by him in \cite{Xi}.

\item The bound \eqref{e.1} improves \cite[(1.3)]{LZZ}. Note that
    \eqref{e.1} is stronger than \eqref{e.0} when $(A_1 q)^{1/3}\le A_2\le
    A_1$.

\item It follows from a theorem of Katz \cite[Theorem 9.5]{Katz} (see also
    \cite[(1.7)]{LZZ} for the case $m=2$) that for fixed
    $\chi_2,\dots,\chi_m\in \cX_q$ satisfying $\chi_2\dotsm \chi_m\neq
    \bone$,
\[\{q^{-(m-1)/2}J(\chi_1,\dots,\chi_m)\}_{\chi_1\in
\cX_q,\ \chi_1\dotsm \chi_m\neq \bone}
\]
is asymptotically equidistributed on the unit circle as $q\to \infty$.

\item When one or both of the $\cA_i$'s are $\cX_q$, better bounds than
    Theorem~\ref{t.D} have been given. See \cite{KZ} by Katz and Z.~Zheng
    (for the case $m=2$ and $\cA_1=\cA_2=\cX_q$) and \cite[Theorems 1.4,
    1.6]{LZZ}.
\end{enumerate}
\end{remark}

Theorem \ref{t.D} follows from the following bounds of the moments of Jacobi
sums, which we deduce from results of Deligne and Katz. Our proof of
\eqref{e.M1} is similar to that of Xi \cite[Theorem 1.4]{Xi}, using
H\"older's inequality. Our proof of \eqref{e.M2} using Cauchy's inequality
was originally presented using matrices indexed by $\widehat{\F_q^\times}$.
Xi suggested the current presentation, similar to the proof of \cite[Lemma
3.8]{dSSV}.

\begin{theorem}\label{t.M}
For all integers $m\ge 2$, $s\ge 1$, $n\ge 1$ and for all nonempty subsets
$\cA_1\subseteq \cX_q$, $\cA_2\subseteq \cX_q$, $\cB\subseteq\cX_q^{m-2}$,
we have
\begin{gather}
\label{e.M1}\lvert M^{(n)}\rvert\le A_1^{1-\frac{1}{2s}}(s!A_2^sq+sA_2^{2s}(n\sqrt q+1))^{\frac{1}{2s}}B,\\
\label{e.M2}\lvert M^{(n)}\rvert \le n\sqrt{A_1A_2q}B,
\end{gather}
where $M^{(n)}=\sum_{(\chi_1,\dots,\chi_m)\in \cA^\circ}
(q^{-\frac{m-1}{2}}J(\chi_1,\dots,\chi_m))^n$,
$\cA=\cA_1\times\cA_2\times\cB$, $A_i=\#\cA_i$, $B=\#\cB$.
\end{theorem}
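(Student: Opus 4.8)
\noindent The plan is to derive both moment bounds from a single reduction of the inner sum over $\chi_1$ to hyper-Kloosterman sums, with Deligne's estimate as the only deep input. Fix $(\chi_3,\dots,\chi_m)\in\cB$, set $\lambda=\chi_3\dotsm\chi_m$, and write the normalized Jacobi power through Gauss sums as
\[M_{\chi_1,\chi_2}=\Bigl(q^{-\frac{m-1}{2}}J(\chi_1,\dots,\chi_m)\Bigr)^n=c\Bigl(\frac{g(\chi_1)g(\chi_2)}{g(\chi_1\chi_2\lambda)}\Bigr)^n,\]
where $g(\chi)=\sum_{a\in\F_q^\times}\chi(a)\psi(a)$ for a fixed nontrivial additive character $\psi$ and $c$ depends only on $\chi_3,\dots,\chi_m$. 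Since $M^{(n)}=\sum_{(\chi_3,\dots,\chi_m)\in\cB}\sum_{\chi_1\in\cA_1,\chi_2\in\cA_2}M_{\chi_1,\chi_2}$ (subject to $\chi_1\chi_2\lambda\neq\bone$), the triangle inequality extracts the factor $B$ in both bounds and reduces everything to estimating $S\colonequals\sum_{\chi_1\in\cA_1,\chi_2\in\cA_2}M_{\chi_1,\chi_2}$ uniformly in $(\chi_3,\dots,\chi_m)$. The elementary Mellin--Kloosterman identity $g(\chi)^n=\sum_{a\in\F_q^\times}\chi(a)\Kl_n(a)$, expressing the $n$th Gauss power through the hyper-Kloosterman sum $\Kl_n(a)=\sum_{x_1\dotsm x_n=a}\psi(x_1+\dots+x_n)$, will be the bridge to Deligne's theory.

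For \eqref{e.M2} I would apply Cauchy's inequality to split off the $\chi_1$-average, after enlarging the $\chi_1$-sum to all of $\cX_q$, obtaining $\lvert S\rvert\le A_1^{1/2}\bigl(\sum_{\chi_1\in\cX_q}\lvert\sum_{\chi_2\in\cA_2}M_{\chi_1,\chi_2}\rvert^2\bigr)^{1/2}$. Expanding the square and carrying out the sum over $\chi_1$ by orthogonality, using $g(\chi_1)^n\overline{g(\chi_1)^n}=q^n$ and the Mellin--Kloosterman identity to open the surviving Gauss factors, turns the inner sum into $q^{-2n}(q-1)\,\overline{g(\chi_2)^n}g(\chi_2')^n\sum_{P\in\F_q^\times}(\chi_2\overline{\chi_2'})(P)\lvert\Kl_n(P)\rvert^2$ for each pair $(\chi_2,\chi_2')$. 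The crucial point is that the vectors $u_P=(g(\chi_2)^n\overline{\chi_2(P)})_{\chi_2}$ are mutually orthogonal of square-norm $q^n(q-1)$, so the resulting Hermitian form in the indicator vector $\mathbf 1_{\cA_2}$ is diagonalized with eigenvalues $q^{-n}(q-1)^2\lvert\Kl_n(P)\rvert^2$; bounding the form by its largest eigenvalue gives $\sum_{\chi_1\in\cX_q}\lvert\sum_{\chi_2\in\cA_2}M_{\chi_1,\chi_2}\rvert^2\le A_2\,q^{-n}(q-1)^2\max_P\lvert\Kl_n(P)\rvert^2$. Deligne's bound $\lvert\Kl_n(P)\rvert\le n\,q^{(n-1)/2}$ then yields $\le n^2qA_2$, hence $\lvert S\rvert\le n\sqrt{A_1A_2q}$ and \eqref{e.M2}.

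For \eqref{e.M1} I would instead apply H\"older's inequality in $\chi_1$ with exponent $2s$, again after completing the $\chi_1$-sum, so that $\lvert S\rvert\le A_1^{1-\frac{1}{2s}}\bigl(\sum_{\chi_1\in\cX_q}\lvert\sum_{\chi_2\in\cA_2}M_{\chi_1,\chi_2}\rvert^{2s}\bigr)^{1/(2s)}$. Expanding the $2s$th power produces, for each choice of $s$-tuples $(\chi_2^{(i)})$ and $(\tilde\chi_2^{(i)})$ in $\cA_2$, an inner sum over $\chi_1$ of a product of $s$ Jacobi powers and their $s$ conjugates; as above this reduces by orthogonality and the Mellin--Kloosterman identity to a sum over $P$ of a monomial in the characters against a product of hyper-Kloosterman values. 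When the multiset $\{\chi_2^{(i)}\}$ equals $\{\tilde\chi_2^{(i)}\}$ the $\chi_1$-sum has a main term of size $q$, and there are $s!A_2^s$ such diagonal configurations, producing the term $s!A_2^sq$; for the remaining configurations the associated character is nontrivial, and Deligne's estimate bounds the corresponding twisted sum by $n\sqrt q+1$, so that counting these off-diagonal terms (at most $A_2^{2s}$, with a combinatorial factor $s$) gives $sA_2^{2s}(n\sqrt q+1)$. Collecting the two contributions and taking $2s$th roots yields \eqref{e.M1}, in parallel with Xi's proof of \cite[Theorem 1.4]{Xi}.

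The main obstacle in both cases is the passage from the completed $\chi_1$-sum to hyper-Kloosterman sums and the clean separation of the diagonal main term from the off-diagonal error. For \eqref{e.M2} the delicate point is the orthogonality of the vectors $u_P$, which is exactly what permits bounding the Hermitian form by its top eigenvalue rather than by a lossy sum over $P$, and is the source of the optimal shape $n\sqrt{A_1A_2q}$; for \eqref{e.M1} it is the bookkeeping of the diagonal count $s!A_2^s$ against the off-diagonal count and the correct placement of the factors $n\sqrt q+1$ and $s$. In both cases Deligne's bound for hyper-Kloosterman sums is the only input beyond elementary character-sum manipulations.
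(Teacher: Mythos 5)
Your argument for \eqref{e.M2} is in substance the paper's argument with the two steps transposed: the paper applies Deligne's bound $\lvert \Kl_n(a)\rvert\le nq^{(n-1)/2}$ pointwise and then Cauchy's inequality to $\sum_{a}\lvert g_1(a)g_2(a)\rvert$, while you apply Cauchy in $\chi_1$ first and then invoke Deligne on the resulting quadratic form; both give $n\sqrt{A_1A_2q}$. Two points are stated loosely. First, the vectors $u_P=(G(\chi)^n\overline{\chi}(P))_{\chi}$ are \emph{not} mutually orthogonal (their Gram matrix has off-diagonal entries of size about $q^n$, coming from $\sum_{\chi}\chi(P'/P)$); what is true, and what your eigenvalue bound actually needs, is that $\sum_P u_Pu_P^*$ is diagonal, i.e.\ $\sum_{P\in\F_q^\times}\bigl\lvert\sum_{\chi\in\cA_2}G(\chi)^n\overline{\chi}(P)\bigr\rvert^2=(q-1)q^nA_2$ by orthogonality of the distinct characters $\chi\overline{\chi'}$ summed over $P$. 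Second, you never account for the excluded pairs with $\chi_1\chi_2\lambda=\bone$, nor for the failure of exact orthogonality when $\chi_1$ runs over $\cX_q$ rather than $\widehat{\F_q^\times}$; the paper handles the former by passing to $\tilde M^{(n)}$ at a cost of $\sqrt{A_1A_2}\,q^{-n/2}$. These are repairable bookkeeping issues.

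The genuine gap is the off-diagonal estimate in \eqref{e.M1}. After H\"older and orthogonality in $\chi_1$, an off-diagonal configuration $(\etab,\rhob)$ contributes, up to normalization, a complete sum over $\chi\in\widehat{\F_q^\times}$ of a product of $2s$ normalized Gauss sums, equivalently a $(2s-1)$-dimensional twisted sum $\sum_{\prod a_i=\prod b_i}\prod_i\Kl_n(a_i)\overline{\Kl_n(b_i)}\,\eta_i\lambda(a_i)\overline{\rho_i\lambda(b_i)}$; each summand of the $\chi$-sum has modulus $1$, so the claimed bound $n\sqrt q+1$ asserts square-root cancellation in a complete character sum of length $q-1$. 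Deligne's pointwise bound for $\Kl_n$ cannot deliver this: applied termwise to the Kloosterman-product sum it gives only $O(n^{2s}q^{s})$, which is useless. The paper obtains the cancellation from Katz's theorem on sums of products of Gauss sums over the full character group \cite[p.~162]{Katz}, which rests on the determination of the geometric monodromy of Kloosterman sheaves together with Weil II, and it is exactly this input that requires $\rhob$ not to be a permutation of $\etab$ (so that, after removing redundant factors, the relevant tensor product sheaf has no trivial constituents). Your closing claim that Deligne's hyper-Kloosterman bound is the only non-elementary input is therefore false for \eqref{e.M1}, and as written the term $sA_2^{2s}(n\sqrt q+1)$ is unproved. (Also, the factor $s$ there is not a combinatorial count of configurations: it is the rank-type constant in Katz's bound $sn(q-1)/\sqrt q$, plus $2s/\sqrt{q^n}$ for the at most $2s$ characters that must be excised from the completed $\chi$-sum because $1/G(\bone)\neq \overline{G(\bone)}/q$.)
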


\begin{proof}
We have
\[M^{(n)}=\sum_{(\chi_3,\dots,\chi_m)\in \cB}\sum_{\substack{\chi_1\in \cA_1,\ \chi_2\in \cA_2\\(\chi_1,\dots,\chi_m)\in \cA^\circ}}
(q^{-\frac{m-1}{2}}J(\chi_1,\dots,\chi_m))^n,
\]
It suffices to prove the stated estimates for the inner sum. Thus we may
assume $B=1$. Let $\cB=\{(\chi_3,\dots,\chi_m)\}$ and $\lambda=\chi_3\dotsm
\chi_m$. Fix a nontrivial additive character $\psi\colon \F_q \to
\C^\times$. For $\chi\in \widehat{\F_q^\times}=\cX_q\cup \{\bone\}$,
consider the Gauss sum
\[G(\chi)=\sum_{a\in \F_q^\times} \psi(a)\chi(a).\]
Recall that $\lvert G(\chi)\rvert=\sqrt q$ for $\chi\in \cX_q$ and
$G(\bone)=-1$. We have
\[J(\chi_1,\dots,\chi_m)=\frac{G(\chi_1)\dotsm G(\chi_m)}{G(\chi_1\dotsm \chi_m)}.\]

By the above, we have
\[\lvert M^{(n)}\rvert\le \sum_{\chi_1\in \cA_1}\left\lvert\sum_{\substack{\chi_2\in \cA_2\\\chi_1\chi_2\lambda\neq \bone}} q^{-n}G(\chi_2)^n\overline{G(\chi_1\chi_2\lambda)}^n\right\rvert.\]
By H\"older's inequality, we get
\begin{align*}
\lvert M^{(n)}\rvert^{2s} &\le A_1^{2s-1}\sum_{\chi_1\in \widehat{\F_q^\times}}\left\lvert\sum_{\substack{\chi_2\in \cA_2\\\chi_1\chi_2\lambda\neq \bone}} q^{-n}G(\chi_2)^n\overline{G(\chi_1\chi_2\lambda)}^n\right\rvert^{2s}\\
&=A_1^{2s-1}\sum_{\etab,\rhob\in \cA_2^s} S(\etab,\rhob)\prod_{i=1}^s q^{-n}\overline{G(\eta_i)}^n G(\rho_i)^n,
\end{align*}
where
\[S(\etab,\rhob)=\sum_{\chi\in \widehat{\F_q^\times}\backslash \cC_{\etab,\rhob} }\prod_{i=1}^s q^{-n}G(\chi\eta_i\lambda)^n\overline{G(\chi\rho_i\lambda)}^n,\]
Here $\cC_{\etab,\rhob}=\{\eta_1^{-1}\lambda^{-1},\dots,
\eta_s^{-1}\lambda^{-1},\rho_1^{-1}\lambda^{-1},\dots,\rho_s^{-1}\lambda^{-1}\}$.
Each summand of $S(\etab,\rhob)$ has absolute value $1$, which implies the
trivial bound $\lvert S(\etab,\rhob)\rvert\le q-2$. If $\rhob$ is not a
permutation of $\etab$, then, removing the redundant factors in the product
and applying Katz's bound \cite[page 162, line~8]{Katz} to the sum extended
to $\widehat{\F_q^\times}$, we get
\[\lvert S(\etab,\rhob)\rvert\le \left\lvert \sum_{\chi\in \widehat{\F_q^\times}}\right\rvert+\left\lvert \sum_{\chi\in \cC_{\etab,\rhob}}\right\rvert\le sn\frac{q-1}{\sqrt{q}}+\frac{2s}{\sqrt{q^n}},\]
where we used the fact $\# \cC_{\etab,\rhob}\le 2s$. Therefore,
\[\lvert M^{(n)}\rvert^{2s}\le A_1^{2s-1}(s! A_2^s q+A_2^{2s} s(n\sqrt{q}+1)).\]

For the proof of \eqref{e.M2}, let
\[
\tilde M^{(n)}=\sum_{\substack{\chi_1\in \cA_1\\\chi_2\in \cA_2}}q^{-\frac{(m+1)n}{2}}G(\chi_1)^n\dotsm G(\chi_m)^n\overline{G(\chi_1\chi_2\lambda)}^n.
\]
Then
\[\lvert \tilde M^{(n)}-M^{(n)}\rvert \le \min\{A_1,A_2\}q^{-n/2}\le \sqrt{A_1A_2}q^{-n/2}\]
and
\[\tilde M^{(n)}=q^{-\frac{(m+1)n}{2}}G(\chi_3)^n\dots G(\chi_m)^n\sum_{\substack{\chi_1\in
\cA_1\\\chi_2\in \cA_2}}\sum_{a\in
\F_q^\times}G(\chi_1)^nG(\chi_2)^n\overline{\chi_1}(a)\overline{\chi_2}(a)\overline{\lambda}(a)\overline{\Kl_n(a)}.
\]
Here we used
\[G(\chi)^n=\sum_{a\in \F_q^\times}\Kl_n(a)\chi(a),\]
where
\[\Kl_n(a)=\sum_{\substack{a_1,\dots,a_n\in \F_q^\times\\a_1\dotsm
a_n=a}} \psi(a_1+\dots+a_n)
\]
is the Kloosterman sum. Applying Deligne's bound \cite[(7.1.3)]{Deligne}
\[\lvert \Kl_n(a)\rvert \le nq^{(n-1)/2},\]
we get
\[\lvert\tilde M^{(n)}\rvert \le nq^{-1/2}\sum_{a\in \F_q^\times}\lvert g_1(a)g_2(a)\rvert,\]
where $g_i(a)=\sum_{\chi\in \cA_i}q^{-n/2}G(\chi)^n\overline{\chi}(a)$.
Since
\[\sum_{a\in \F_q^\times}\lvert g_i(a)\rvert^2=\sum_{a\in \F_q^\times}\sum_{\eta,\rho\in \cA_i}q^{-n}\overline{G(\eta)}^nG(\rho)^n\eta\overline{\rho}(a)=A_i(q-1),\]
we have
\[\sum_{a\in \F_q^\times}\lvert g_1(a)g_2(a)\rvert\le \sqrt{A_1A_2}(q-1)\]
by Cauchy's inequality. Therefore,
\[\lvert M^{(n)}\rvert\le \sqrt{A_1A_2}q^{-1/2}(n(q-1)+1).\]
\end{proof}

\begin{proof}[Proof of Theorem \ref{t.D}]
Given nonnegative functions $f$ and $g$, we write $f\ll g$ for the existence
of an \emph{absolute} constant $C_0$ such that $f\le C_0 g$. Let
$D=D(\cA_1\times\cA_2\times \cB)$. We apply the Erd\H os-Tur\'an inequality
\cite[Theorem III]{ET}. For any real number $K\ge 1$, we have
\[D\ll \frac{1}{K}+\sum_{1\le n\le K} \frac{M^{(n)}}{n\#\cA^\circ}.\]
Note that $\#\cA^\circ\ge (A_1-1)A_2B$.

For \eqref{e.0}, taking $C\ge 2$, we may assume $A_1\ge (2s)^{2s+1} \sqrt q$
since $D\le 1$. By \eqref{e.M1}, we get
\begin{align}
\notag D&\ll \frac{1}{K}+\sum_{1\le n\le K}\left(\frac{\sqrt s}{n} \left(\frac{q}{A_1A_2^s}\right)^{\frac{1}{2s}}+\frac{1}{n}\left(\frac{sn\sqrt q}{A_1}\right)^{\frac{1}{2s}}\right)\\
\label{e.D}&\ll \frac{1}{K}+\sqrt s\left(\frac{q}{A_1A_2^s}\right)^{\frac{1}{2s}}(1+\log K) +s\left(\frac{sK\sqrt q}{A_1}\right)^{\frac{1}{2s}}.
\end{align}
Taking $K=s^{-1}\left(\frac{A_1}{\sqrt q}\right)^{\frac{1}{2s+1}}$, the
first and third terms of the right-hand side of \eqref{e.D} are both equal
to $s\left(\frac{\sqrt q}{A_1}\right)^{\frac{1}{2s+1}}$. For the remaining
term, note that $K\ge 2$ by our assumption on $A_1$ and thus $1+\log K\ll
\log K\ll s^{-1}\log q$. Here in the last estimate we used the fact $A_1<q$,
which implies $K<q^{\frac{1}{4s+2}}$. The bound \eqref{e.0} follows.

For \eqref{e.1}, we may assume $A_1A_2\ge q$ and, by symmetry, $A_1\ge \sqrt
q$. By \eqref{e.M2}, we get
\[D\ll \frac{1}{K}+K\sqrt{\frac{q}{A_1A_2}}.\]
We conclude by taking $K=\left(\frac{q}{A_1A_2}\right)^{-1/4}$.
\end{proof}

\begin{proof}[Proof of Corollary \ref{c}]
Part (ii) is obvious from \eqref{e.1}. For (i), it suffices to show the
existence of a constant $C'>0$ (depending on $\epsilon$ and $c$) such that
\[D(\cA_1\times \cA_2\times \cB)\le C'\left((\log\log
q)^{-1}+\sqrt{\frac{(\log
q)^{\frac{1}{\epsilon}-1}(\log\log q)}{A_2}}\right).
\]
To see this, we take $s=\lceil \frac{\epsilon\log q}{2\log\log q}\rceil$ in
\eqref{e.0} and note that there exist constants $c'$ and $c''$ (depending on
$\epsilon$ and $c$) such that
\begin{align*}
\log\left(s\left(\frac{\sqrt q}{A_1}\right)^{\frac{1}{2s+1}}\right)&\le \log s+\frac{\log(c^{-1}q^{-\epsilon})}{2s+1}\\
&\le (\log \log q-\log\log\log q)-\log\log q +c'\\
&=-\log\log\log q +c',\\
\log\left(\frac{1}{\sqrt{s}}\left(\frac{q}{A_1}\right)^{\frac{1}{2s}}\log q\right)&\le -\frac{1}{2}\log s+\frac{\log(c^{-1}q^{\frac{1}{2}-\epsilon})}{2s}+\log\log q\\
&\le -\frac{1}{2}(\log\log q-\log\log\log q)+\left(\frac{1}{2\epsilon}-1\right)\log\log q+\log\log q+c''\\
&=\left(\frac{1}{2\epsilon}-\frac{1}{2}\right)\log\log q+\frac{1}{2}\log\log\log q+c''.
\end{align*}
\end{proof}

\subsection*{Acknowledgments}
We wish to thank Ping Xi for many helpful comments, especially his
suggestion on the presentation of the proof of \eqref{e.M2}. We are grateful
to Nick Katz and Zhiyong Zheng for encouragement. We thank the referee for
useful comments. We acknowledge the support of Princeton University, where
part of this work was done during a visit.

\begin{bibdiv}
\begin{biblist}
\bib{dSSV}{article}{
   author={de la Bret\`eche, R\'{e}gis},
   author={Sha, Min},
   author={Shparlinski, Igor E.},
   author={Voloch, Jos\'{e} Felipe},
   title={The Sato-Tate distribution in thin parametric families of elliptic
   curves},
   journal={Math. Z.},
   volume={290},
   date={2018},
   number={3-4},
   pages={831--855},
   issn={0025-5874},
   review={\MR{3856834}},
   doi={10.1007/s00209-018-2042-0},
}

\bib{Deligne}{article}{
   author={Deligne, P.},
   title={Application de la formule des traces aux sommes
   trigonom\'etriques},
   book={
       title={Cohomologie \'etale},
       series={Lecture Notes in Mathematics},
       volume={569},
       note={S\'eminaire de G\'eom\'etrie Alg\'ebrique du Bois-Marie SGA
   4$\frac{1}{2}$},
       publisher={Springer-Verlag},
       place={Berlin},
       review={\MR{0463174 (57 \#3132)}},
       date={1977},
   },
   pages={168--232},
}

\bib{ET}{article}{
   author={Erd{\"o}s, P.},
   author={Tur{\'a}n, P.},
   title={On a problem in the theory of uniform distribution. I, II},
   journal={Nederl. Akad. Wetensch., Proc.},
   volume={51},
   date={1948},
   pages={1146--1154, 1262--1269 = Indagationes Math. \textbf{10}, 370--378, 406--413},
   review={\MR{0027895 (10,372c)}, \MR{0027896 (10,372d)}},
}

\bib{Katz}{book}{
   author={Katz, N. M.},
   title={Gauss sums, Kloosterman sums, and monodromy groups},
   series={Annals of Mathematics Studies},
   volume={116},
   publisher={Princeton University Press},
   place={Princeton, NJ},
   date={1988},
   pages={x+246},
   isbn={0-691-08432-7},
   isbn={0-691-08433-5},
   review={\MR{955052 (91a:11028)}},
}

\bib{KZ}{article}{
   author={Katz, N. M.},
   author={Zheng, Z.},
   title={On the uniform distribution of Gauss sums and Jacobi sums},
   conference={
      title={Analytic number theory, Vol.\ 2},
      address={Allerton Park, IL},
      date={1995},
   },
   book={
      series={Progr. Math.},
      volume={139},
      publisher={Birkh\"auser Boston},
      place={Boston, MA},
      date={1996},
   },
   pages={537--558},
   review={\MR{1409377 (97e:11089)}},
}

\bib{LZZ}{article}{
   author={Lu, Qing},
   author={Zheng, Weizhe},
   author={Zheng, Zhiyong},
   title={On the distribution of Jacobi sums},
   journal={J. Reine Angew. Math.},
   volume={741},
   date={2018},
   pages={67--86},
   issn={0075-4102},
   review={\MR{3836143}},
   doi={10.1515/crelle-2015-0087},
}

\bib{Shp}{article}{
   author={Shparlinski, I. E.},
   title={On the distribution of arguments of Gauss sums},
   journal={Kodai Math. J.},
   volume={32},
   date={2009},
   number={1},
   pages={172--177},
   issn={0386-5991},
   review={\MR{2518562 (2010b:11104)}},
   doi={10.2996/kmj/1238594554},
}

\bib{Xi}{article}{
   author={Xi, Ping},
   title={Equidistributions of Jacobi sums},
   note={arXiv:1809.04286v1, preprint},
}
\end{biblist}
\end{bibdiv}

\end{document}